\newtheorem{theorem}{Theorem}
\numberwithin{equation}{section}
\title{Pyramidal Polytopes in the Stability Region}
\author{Vakif Dzhafarov (Cafer) \\ Department of Mathematics, Faculty of Science,\\
Eskisehir Technical University, Eskisehir 26470, Turkey \\ 
\"{O}zlem Esen \\ School for the Handicapped,\\ Anadolu University, Eskisehir 26470, 
Turkey \\ Taner B\"uy\"ukk\"oro\u{g}lu\\ Department of Mathematics, Faculty of Science,\\
Eskisehir Technical University, Eskisehir 26470, Turkey} 
\begin{document}
\maketitle
\begin{abstract}
Every $n$th order monic polynomial corresponds $n$-dimensional vector. If the given polynomial is stable that is all its roots lie in the open left half plane it is said to be Hurwitz polynomial and the corresponding vector is called stable vector. The set of stable vectors is non-convex. In this paper, we define special $(n+1)$ stable vectors such that their convex hull is stable. 
\end{abstract}



\section{Introduction}
Consider the following polynomial
\begin{equation}
\label{polinom}
a(s)=a_1+a_2s+...+a_ns^{n-1}+s^n
\end{equation}
The  polynomial $a(s)$ corresponds $n$-dimensional vector $a=(a_1,a_2,...,a_n)^T \in \mathbb{R}^{n}$, where the symbol ``T'' stands for the transpose, and the symbol $R^n$ stands for the $n$-dimensional real Euclidean space. 
\\ The polynomial (\ref{polinom}) has $n$ roots and if all roots $s_1,s_2,...,s_n$ satisfy the condition $Re(s_i)<0$ $(i=1, 2, ..., n)$ then it is called (Hurwitz) stable \cite{Barmish1994,Bhattacharyya1995}.
\\ Define 
\[
  \mathcal{H}^{n}=\{a=(a_1,a_2,\dots,a_n)^T \in \mathbb{R}^n: \ a(s) \mbox{ is Hurwitz stable}\}.
\]

It is well known that $\mathcal{H}^{n}\subset\mathbb{R}^{n}_+$, where

\[
\mathbb{R}^{n}_+=\{x=(x_1, x_2,..., x_n)^T\in\mathbb{R}^{n}:x_{i}>0, (i=1, 2, ..., n)\}.
\]

This necessary condition is not sufficient. Indeed the polynomial \\ 
\[
a(s)=s^3+\frac{2}{3} s^2+\frac{1}{6}s+\frac{1}{2}\]
\\
is not stable since has roots $s=-1$ and $s=\frac{1}{6}\pm i\frac{\sqrt{17}}{6}$.\\ 

The convex hull of a finite number of points from $\mathbb{R}^{n}$ is called a polytope in $\mathbb{R}^{n}$. 
\\

The aim of this note is to define special polytopes in  $\mathbb{H}^{n}$. These polytopes are defined as the convex hull of $(n+1)$ points therefore they has a pyramidal structure. On of these points is especially defined point from $\mathcal{H}^{n}$, the remaining $n$ points belong to the boundary of the set $\mathcal{H}^{n}$. Therefore the interiors of these polytopes belong to $\mathcal{H}^{n}$.

\section{Polytopes in $\mathcal{H}^{n}$ }
\label{sec:headings}

Second order polynomial $s^2+k_1s+k_2$ is stable if and only if $k_1>0$ and $k_2>0$.\\ Let $n$ be even, $m=\frac{n}{2}$
and $\alpha_1, \alpha_2,..., \alpha_m$ are different positive numbers satisfying conditions $\alpha_i\geq 1$, $(i=1, 2, ..., m)$.
Consider the following $nth$ order stable polynomial
 
\[
a^0(s)=(s^2+\alpha_1 s+\alpha_1)(s^2+\alpha_2 s+\alpha_2)\cdots(s^2+\alpha_m s+\alpha_m)
\]
which corresponds to the stable point $a^0\in\mathcal{H}^{n}$. Consider
\[
  \begin{array}{rcl}
  a^1(s)&=&(s^2+\alpha_1).(s^2+ \alpha_2 s+\alpha_2)\cdots(s^2+\alpha_m s+\alpha_m)\\
 a^2(s)&=&(s^2+\alpha_1 s).(s^2+ \alpha_2 s+\alpha_2)\cdots(s^2+\alpha_m s+\alpha_m)\\
  a^3(s)&=&(s^2+\alpha_1 s+\alpha_1).(s^2+ \alpha_2)\cdots(s^2+\alpha_m s+\alpha_m)\\
 a^4(s)&=&(s^2+\alpha_1 s+\alpha_1).(s^2+ \alpha_2s)\cdots(s^2+\alpha_m s+\alpha_m) \\
    &\vdots&\\
a^n(s)&=&(s^2+\alpha_1 s+\alpha_1).(s^2+ \alpha_2 s+\alpha_2)\cdots(s^2+\alpha_m s)\\
\end{array}
\]
We set
 \begin{equation}
  \label{polconv1}
  \mathcal{P}=\mathrm{conv}\{a^0, a^1, \dots, a^n\} \subset \mathbb{R}^{n}.
\end{equation}

\begin{theorem} The interior of the polytope $\mathcal{P}$ (\ref{polconv1}) belongs to $\mathcal{H}^n$ that is the interior points of the polytope  $\mathcal{P}$ are stable.
\end{theorem}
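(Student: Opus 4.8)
The plan is to reduce the theorem to an algebraic assertion about convex combinations and then to a single inequality on the imaginary axis. Since $\mathcal{P}=\operatorname{conv}\{a^0,\dots,a^n\}$ is the convex hull of $n+1$ points in $\mathbb{R}^n$, it is either lower-dimensional (so its interior is empty and there is nothing to prove) or an $n$-simplex, whose interior consists of the points $b=\sum_{j=0}^n\lambda_j a^j$ with all $\lambda_j>0$ and $\sum_j\lambda_j=1$; in either case it suffices to show that any $b=\sum_{j=0}^n\lambda_j a^j$ with $\lambda_0>0$, $\lambda_1,\dots,\lambda_n\ge0$, $\sum_j\lambda_j=1$ is Hurwitz. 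Write $q_i(s)=s^2+\alpha_i s+\alpha_i$, and for $k=1,\dots,m$ put $c_k=\lambda_{2k-1}$, $d_k=\lambda_{2k}$. Using $s^2+\alpha_k=q_k-\alpha_k s$ and $s^2+\alpha_k s=q_k-\alpha_k$ in the definitions of $a^{2k-1}$ and $a^{2k}$, collecting terms over $k$, and invoking $\lambda_0+\sum_{k}(c_k+d_k)=1$, I expect to arrive at the factorization
\[
  b(s)=\Bigl(\prod_{i=1}^m q_i(s)\Bigr)\Bigl(\lambda_0+\sum_{k=1}^m h_k(s)\Bigr),\qquad
  h_k(s)=\frac{(c_k+d_k)s^2+\alpha_k d_k s+\alpha_k c_k}{s^2+\alpha_k s+\alpha_k}.
\]

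The heart of the proof is the claim that $\operatorname{Re}h_k(i\omega)\ge 0$ for all real $\omega$ and all $k$. To see this, multiply through by $|q_k(i\omega)|^2=(\alpha_k-\omega^2)^2+\alpha_k^2\omega^2>0$ and check, with $t=\omega^2\ge0$, that
\[
  |q_k(i\omega)|^2\operatorname{Re}h_k(i\omega)=(c_k+d_k)t^2+\alpha_k\bigl(d_k(\alpha_k-1)-2c_k\bigr)t+\alpha_k^2 c_k .
\]
Both the leading coefficient $c_k+d_k$ and the constant term $\alpha_k^2 c_k$ are nonnegative. If $d_k(\alpha_k-1)-2c_k\ge0$ the right-hand side is manifestly nonnegative for $t\ge0$; if $d_k(\alpha_k-1)<2c_k$, then $\alpha_k\ge1$ gives $d_k(\alpha_k-1)^2\le 2\alpha_k d_k(\alpha_k-1)<4\alpha_k c_k$, so the discriminant $\alpha_k^2 d_k\bigl(d_k(\alpha_k-1)^2-4\alpha_k c_k\bigr)$ is negative and the quadratic in $t$ has no real roots. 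Hence $\operatorname{Re}h_k(i\omega)\ge0$, so $\operatorname{Re}\bigl(\lambda_0+\sum_k h_k(i\omega)\bigr)\ge\lambda_0>0$, and the displayed factorization shows $b(i\omega)\neq0$ for every $\omega\in\mathbb{R}$. This is precisely the point where the hypothesis $\alpha_k\ge1$ is essential.

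Finally, I would upgrade ``$b$ has no roots on the imaginary axis'' to ``$b$ is Hurwitz'' by a zero-exclusion/continuity argument. Along the segment $p_\theta=(1-\theta)a^0+\theta b$, $\theta\in[0,1]$, each $p_\theta$ again has the required form, with the coefficient of $a^0$ equal to $1-\theta(1-\lambda_0)\ge\lambda_0>0$; by the computation above $p_\theta(i\omega)\neq0$ for all $\theta\in[0,1]$ and all real $\omega$. Since every $p_\theta$ is monic of degree $n$, the roots depend continuously on $\theta$ and stay in a fixed bounded region, and $p_0=a^0$ is Hurwitz; as no root can cross the imaginary axis, $p_1=b$ is Hurwitz as well. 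Equivalently, one may note that each $h_k$ is a positive-real function, hence $\operatorname{Re}h_k(s)\ge0$ on the whole half-plane $\{\operatorname{Re}s\ge0\}$, which makes $b(s)\neq0$ there directly.

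I expect the main obstacle to be the verification of the quadratic-in-$t$ identity for $\operatorname{Re}h_k(i\omega)$ together with the small case analysis that isolates the role of $\alpha_k\ge1$; once this inequality is established, the factorization of $b$ and the homotopy step are routine.
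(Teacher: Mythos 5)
Your proposal is correct, and the computations check out: the factorization $b=\bigl(\prod_i q_i\bigr)\bigl(\lambda_0+\sum_k h_k\bigr)$ is exactly what collecting the convex weights gives, the quadratic-in-$t$ identity for $|q_k(i\omega)|^2\operatorname{Re}h_k(i\omega)$ is right, and the case analysis using $\alpha_k\ge 1$ does force nonnegativity (one pedantic point: when $d_k=0$ the discriminant is zero rather than negative, but the quadratic is then $c_k(t-\alpha_k)^2\ge0$, so the conclusion stands). However, your route is genuinely different from the paper's. The paper argues edge by edge: it reduces any edge $[a^i,a^j]$ to a fourth-order polytope $\mathrm{conv}\{b^1,b^2,b^3,b^4\}$ (since the other quadratic factors are common), verifies stability of each open edge by the explicit Hurwitz determinant conditions $b_2b_3b_4-b_1b_4^2-b_2^2>0$ (degree $4$) and $b_2b_3-b_1>0$ (degree $3$), observes that the edges joining two ``even'' vertices lie on the stability boundary with a root at $s=0$, and then invokes the Edge Theorem of Bartlett--Hollot--Huang to pass from edges to the interior. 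You instead treat an arbitrary point of the polytope at once, via a positive-real-part argument on the imaginary axis plus zero exclusion along the segment from $a^0$. Your approach buys several things: it is self-contained (no appeal to the Edge Theorem), it proves the slightly stronger statement that \emph{every} point of $\mathcal P$ carrying positive weight $\lambda_0$ on $a^0$ is Hurwitz (so in particular every face containing $a^0$ is stable except for its base), and it isolates cleanly where the hypothesis $\alpha_k\ge1$ is used. What the paper's approach buys is that each verification is a completely elementary inequality in two parameters $\alpha,\beta$, with no need for the harmonic/continuity machinery, and it exhibits explicitly which edges of $\mathcal P$ lie on the stability boundary.
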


\begin{proof} We will show that:\\
If both $i$ and $j$ are even $(1\le i, j\le n, i \neq j)$ then the segment $[a^i(s),  a^i(s)]$ completely belongs to the stability boundary. If at least one of the numbers $i$ and  $j$ is odd then the open segment $(a^i(s),  a^i(s))$ is stable. On the other hand the segments $[a^0(s),  a^i(s)]$ $(1\le i\le n)$  are stable by construction. \\ 

Given two polynomials $a^i(s)$ and  $a^j(s))$ $(1\le i, j\le n, i \neq j)$, $(m-2)$ second order factors of these polynomials are the same. Therefore it is sufficient to investigate the stability property of the following fourth order polynomial polytope  conv$\{b^1, b^2, b^3, b^4\}$, where 

\[
  \begin{array}{lcl}
  b^1(s)=(s^2+\alpha)(s^2+\beta s+\beta), \\
  
  b^2(s)=(s^2+\alpha s)(s^2+\beta s+\beta),\\
    
  b^3(s)=(s^2+\alpha s+ \alpha)(s^2+\beta),\\ 
  
  b^4(s)=(s^2+\alpha s+ \alpha)(s^2+\beta s),\\     
  \end{array}
\]
and $\alpha\geq1, \beta\geq1, \alpha\neq\beta$.
\\ Stability of the segments $(b^1(s),  b^2(s))$ and $(b^3(s),  b^4(s))$ are obvious.\\ Stability of  $(b^1(s),  b^3(s))$: We have 

\[
  \begin{array}{lcl}
  b^1(s)=s^4+ \beta s^3+(\alpha+ \beta)s^2+\alpha \beta s+\alpha \beta, \\
  b^3(s)=s^4+\alpha s^3+(\alpha+ \beta)s^2+\alpha \beta s+\alpha \beta.  
  \end{array}
\]
For $\lambda \in (0,1)$ the convex combination polynomial is

\[
  \begin{array}{lcl}
  \lambda b^1(s)+(1-\lambda )b^2(s)= s^4+[\lambda\beta+(1-\lambda)\alpha ]s^3+(\alpha+ \beta)s^2+\alpha \beta s+\alpha \beta .  
    \end{array}
\]    
Recall that the fourth order monic polynomial $b_1+b_2s+b_3s^2+b_4s^3+s^4$ 
with positive coefficients is stable if and only if

\[
  \begin{array}{lcl}
  b_2\cdot b_3 \cdot b_4 - b_1 \cdot b_4^2-b_2^2 >0.
  
   \end{array}
\]
Therefore for the above convex combination we have
 \[
  \begin{array}{lcl}
  \lambda b^1(s)+(1-\lambda )b^2(s)&=&\alpha\beta(\alpha+ \beta)[\lambda( \beta-\alpha)+\alpha]-\alpha\beta[\lambda(\beta-\alpha)+\alpha]^2-(\alpha\beta)^2\\
  &=& \alpha\beta[\lambda(\beta^2-\alpha^2)+\alpha^2+\alpha\beta-\lambda^2(\beta-\alpha)^2-2\lambda\alpha(\beta-\alpha)-\alpha^2-\alpha\beta]\\
  &=&\alpha\beta\lambda(\beta-\alpha) [\alpha+\beta-\lambda(\beta-\alpha)-2\alpha]\\
  &=&\lambda(1-\lambda) \alpha\beta(\beta-\alpha)^2>0.  
    \end{array}
\]
Stability of 
$(b^1(s),  b^4(s))$:

\[
  \begin{array}{lcl}
  \lambda b^1(s)+(1-\lambda )b^4(s)=s^4+(-\lambda\alpha+\alpha+\beta)s^3+(-\lambda\alpha\beta+\lambda\beta+\alpha\beta+\alpha)s^2+\alpha\beta s+\lambda\alpha\beta
\end{array}
\] and\\

\[
  \begin{array}{lcl}
    b_2\cdot b_3 \cdot b_4 - b_1 \cdot b_4^2-b_2^2
    &=&(\alpha^2\beta)(1-\lambda)(\alpha\beta-\lambda\alpha\beta+\alpha-\lambda\alpha+\beta^2-\lambda\beta+\lambda^2\alpha>\\
   &&(\alpha^2\beta)(1-\lambda)[(1-\lambda)\alpha\beta+\alpha(1-\lambda)+\lambda^2\alpha+\beta(1-\lambda)] >0
  \end{array}
\]
We have used the inequality $\beta^2>\beta$ which is a consequence of the condition $\beta>1$.\\ Stability of $(b^2(s),  b^3(s))$ follows from the stability proof of the segment $(b^1(s),  b^4(s))$.\\ 
Now consider the segment $[b^2(s),  b^4(s)]$. We show that for any $\lambda\in[0,1]$ the polynomial $\lambda b^2(s)+(1-\lambda )b^4(s)$ has one root $s=0$, whereas the remaining three roots as stable. Indeed, 

\[
\begin{array}{lcl}
  b^2(s)=s(s+\alpha)(s^2+\beta s+\beta)=s[s^3+(\alpha+\beta)s^2+(\alpha\beta+\beta)s+\alpha\beta],\\
  b^4(s)=s(s+\beta)(s^2+\alpha s+\alpha)=s[s^3+(\alpha+\beta)s^2+(\alpha\beta+\alpha)s+\alpha\beta],\\  \end{array}
\]

\[
  \begin{array}{lcl}
   \lambda b^2(s)+ (1-\lambda ) b^4(s)=s [s^3+(\alpha+\beta)s^2+((1-\lambda)\alpha+\lambda\beta+\alpha\beta)s+\alpha\beta]   
    \end{array}
\]
Recall that the third order monic polynomial $b_1+b_2s+b_3s^2+s^3$ with positive coefficients is stable if and only if 

\[
\begin{array}{lcl}
  b_2 b_3-b_1>0. 
  \end{array}
\] 
 For the above third order polynomial in the bracket we have  
  
  \[
\begin{array}{lcl}
  b_2 b_3-b_1=[(1-\lambda)\alpha+\lambda\beta+\alpha\beta](\alpha+\beta)-\alpha\beta\\=[(1-\lambda)\alpha+\lambda\beta](\alpha+\beta)+ \alpha\beta(\alpha+\beta-1)>0 
   \end{array}
   \]  
(We have used the conditions $\alpha>1$ and $\beta >1$). Consequently all three roots of this polynomial are stable and the fourth root is $s=0$  which belongs to the stability boundary. \\ In conclusion, the polytope $\mathcal P$ (\ref{polconv1}) has the following properties:
\begin{itemize} 
\item[-] The segments $[a^0(s), a^i(s)]$, where $i= 1, 2,...,n$, are stable by the construction. 
\item[-] If at least one of numbers $i$ and  $j$ is odd then the open segment $(a^i(s), a^j(s))$ is stable.
\item[-] If both $i$ and  $j$ are even $(i<j)$ then the segment $[a^i(s), a^j(s)]$ is placed on the stability boundary.
\end{itemize}
By the Edge Theorem the inner points of the polytope $\mathcal P$ are stable \cite{Hollot1988,Nurges2005}.
\end{proof}
Now assume than $n$ is odd and $n=2m+1$. As in the case when $n$ is even, choose arbitrary different numbers $\alpha_1, \alpha_2,..., \alpha_m, \alpha_{m+1}$ with $\alpha_i\geq1 $ $(i= 1, 2,...,m+1)$. Consider the following $n th$ order stable polynomial  
 
 \[
\begin{array}{lcl}
a^0(s)=(s^2+\alpha_1 s+\alpha_1) \cdots(s^2+\alpha_m s+\alpha_m)(s+\alpha_{m+1}) \\
\end{array}
 \]
  Define the following $n$ polynomials from the stability boundary 
 
 \[
\begin{array}{rcl}
a^1(s)&=&(s^2+\alpha_1).(s^2+ \alpha_2 s+\alpha_2) \cdots(s^2+\alpha_m s+\alpha_m)(s+\alpha_{m+1}) \\
a^2(s)&=&(s^2+\alpha_1 s).(s^2+ \alpha_2 s+\alpha_2) \cdots(s^2+\alpha_m s+\alpha_m)(s+\alpha_{m+1}) \\
    &\vdots&\\
 a^{n-2}(s)&=&(s^2+\alpha_1 s+\alpha_1) \cdots(s^2+\alpha_m)(s+\alpha_{m+1}) \\   
 a^{n-1}(s)&=&(s^2+\alpha_1 s+\alpha_1) \cdots(s^2+\alpha_m s)(s+\alpha_{m+1}) \\
a^n(s)&=&(s^2+\alpha_1 s+\alpha_1) \cdots(s^2+\alpha_m s+\alpha_m)s \\ 
 \end{array}
 \]  
 and the polytope 
  \begin{equation}
  \label{polconv2}
  \mathcal{P}=\mathrm{conv}\{a^0, a^1, \dots, a^n\}.
\end{equation}
 
\begin{theorem}
The interior points of the polytope $\mathcal{P}$ (\ref{polconv2}) are stable.
\end{theorem}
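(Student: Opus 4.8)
The plan is to mirror the proof of Theorem~1: I will classify every edge of the polytope $\mathcal P$ in (\ref{polconv2}) as either strictly stable on its relative interior (with both endpoints weakly stable) or entirely contained in the stability boundary, and then invoke the Edge Theorem in the same form. Write $q_p(s)=s^2+\alpha_p s+\alpha_p$, so that $a^0=\big[\prod_{p=1}^{m}q_p(s)\big](s+\alpha_{m+1})$ and $a^n=\big[\prod_{p=1}^{m}q_p(s)\big]s$. The \emph{apex edges} $[a^0,a^i]$ are the ``by construction'' part: since $a^i$ differs from $a^0$ in a single factor, the convex combination factors as a fixed stable product times one varying factor, namely $\big[\prod_{p\ne k}q_p(s)\big](s+\alpha_{m+1})(s^2+\lambda\alpha_k s+\alpha_k)$ for $i=2k-1$, the same with last factor $s^2+\alpha_k s+\lambda\alpha_k$ for $i=2k$, and $\big[\prod_{p=1}^{m}q_p(s)\big](s+\lambda\alpha_{m+1})$ for $i=n$; in each case the varying factor has positive coefficients for $\lambda\in(0,1]$, so $[a^0,a^i)$ is strictly stable.

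For the \emph{base edges $[a^i,a^j]$ with $1\le i<j\le n-1$} (both modifying quadratic factors): if $a^i,a^j$ modify two different quadratics $q_k,q_l$, then cancelling the $m-2$ common quadratics and the common linear factor $s+\alpha_{m+1}$ (all stable) reduces the edge to an edge of the fourth-order polytope $\mathrm{conv}\{b^1,b^2,b^3,b^4\}$ already settled in the proof of Theorem~1, so the edge is strictly stable on its relative interior unless both $i$ and $j$ are even, in which case it lies on the boundary; if instead $a^i,a^j$ modify the same quadratic (so $\{i,j\}=\{2k-1,2k\}$), the convex combination equals $\big[\prod_{p\ne k}q_p(s)\big](s+\alpha_{m+1})\big(s^2+(1-\lambda)\alpha_k s+\lambda\alpha_k\big)$, strictly stable for $\lambda\in(0,1)$ with both endpoints on the boundary.

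The genuinely new case is the \emph{base edges meeting $a^n$}. Taking out the common stable factor $\prod_{p\ne k}q_p(s)$, where $a^i$ modifies $q_k$: for $i=2k$ the convex combination equals $\big[\prod_{p\ne k}q_p(s)\big]\,s\,\big(s^2+(\alpha_k+\lambda\alpha_{m+1})s+\alpha_k(1+\lambda(\alpha_{m+1}-1))\big)$, and the bracketed quadratic has positive coefficients for every $\lambda\in[0,1]$ since $\alpha_{m+1}\ge1$, so $[a^{2k},a^n]$ lies entirely on the boundary with sole imaginary-axis root $s=0$; for $i=2k-1$ it equals $\big[\prod_{p\ne k}q_p(s)\big]\big(s^3+(\lambda\alpha_{m+1}+(1-\lambda)\alpha_k)s^2+\alpha_k s+\lambda\alpha_k\alpha_{m+1}\big)$, and for $\lambda\in(0,1)$ all coefficients are positive while the cubic Hurwitz quantity is $\alpha_k\big(\lambda\alpha_{m+1}+(1-\lambda)\alpha_k\big)-\lambda\alpha_k\alpha_{m+1}=(1-\lambda)\alpha_k^2>0$, so $(a^{2k-1},a^n)$ is strictly stable, its endpoints ($\lambda=0$: root $s=0$; $\lambda=1$: roots $\pm i\sqrt{\alpha_k}$) lying on the boundary.

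Assembling these observations, every edge of $\mathcal P$ is either strictly stable on its relative interior (with weakly stable endpoints) or lies entirely on $\partial\mathcal H^n$; every edge of the second kind is contained in the proper face $F_0=\mathrm{conv}\{a^2,a^4,\dots,a^{2m},a^n\}$, because a polynomial of $\mathcal P$ has a root at $s=0$ exactly when its constant term vanishes, and the constant term equals $\alpha_{m+1}\prod_p\alpha_p>0$ at $a^0$ and at every $a^{2k-1}$ but $0$ at every $a^{2k}$ and at $a^n$. Since an interior point of $\mathcal P$ is not on any proper face, it has positive constant term and hence no root at $s=0$, and the Edge Theorem then applies exactly as in Theorem~1 (\cite{Hollot1988,Nurges2005}) to conclude that the interior of $\mathcal P$ is stable. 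I expect the genuinely delicate point to be this last invocation of the Edge Theorem, the step already left implicit in Theorem~1: the Edge Theorem in its bare form certifies stability of a polytope all of whose edges are \emph{strictly} stable, whereas here several edges sit on $\partial\mathcal H^n$, so one must still verify that no boundary edge can push an interior polynomial onto the imaginary axis. Ruling out the root $s=0$ on the interior is immediate; ruling out a root $i\omega$ with $\omega>0$ is the crux, and I would handle it by examining the value set $\mathcal P(i\omega)=\{p(i\omega):p\in\mathcal P\}$ --- a convex polygon whose boundary consists of images of edges of $\mathcal P$ --- and showing, using that the only imaginary-axis roots occurring along edges are $s=0$ and $s=\pm i\sqrt{\alpha_k}$, that $0$ stays on the boundary of $\mathcal P(i\omega)$ for every $\omega\ge0$; then an interior polynomial, whose values at $i\omega$ fill the interior of $\mathcal P(i\omega)$, cannot vanish there.
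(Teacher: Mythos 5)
Your proposal is correct and follows essentially the same route as the paper: the same classification of edges (apex edges stable by construction, quadratic--quadratic edges reduced to the fourth-order polytope of Theorem~1, edges meeting $a^n$ reduced to the cubic computation that the paper carries out via $\mathrm{conv}\{c^1,c^2,c^3\}$, with the identical Hurwitz quantity $(1-\lambda)\alpha^2>0$), followed by the Edge Theorem. Your closing remark about the need to justify the Edge Theorem when some edges lie on the stability boundary is a fair point that the paper leaves implicit, but it does not change the fact that the two arguments coincide.
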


\begin{proof}- The segments 
$[a^0(s), a^i(s)), (i= 1, 2,...,n)$ are stable by the construction. \\ -The segments $[a^i(s), a^j(s)]$, where $i$ and  $j$ are even and $2\leq i,j \leq n-1$, are placed on the stability boundary (see  the segment $[b^1(s),  b^4(s)]$from the proof of Theorem 1 ).\\
- If $1\leq i,j \leq n-1$ and least one of numbers $i$ and  $j$ is odd then the stability of  $(a^i(s), a^j(s))$ can be proven by the analogy with the proof of Theorem 1. \\ Consider the polytope  conv$\{c^1(s), c^2(s), c^3(s)\} $, where 
\[
\begin{array}{lcl}
  
  c^1(s)=(s^2+\alpha)(s+\beta)\\
  c^2(s)=(s^2+\alpha s)(s+\beta)\\
  c^3(s)=(s^2+\alpha s+\alpha)s
  \end{array}
\]

This polytope has three edges. The edge $(c^1(s), c^2(s))$ is stable.\\ Stability of $(c^1(s), c^3(s))$:
\[
\begin{array}{lcl}
  
  c^1(s)=s^3+\beta s^2+\alpha s+ \alpha\beta, \\
  
  c^3(s)=s^3+\alpha s^2+\alpha s,\\
  \lambda c^1(s)+(1-\lambda)c^3(s)= s^3+[\lambda\beta+(1-\lambda)\alpha]+ \alpha s+ \lambda\alpha\beta

\end{array}
\]
  and
  
  \[
\begin{array}{lcl}
   [\lambda\beta+(1-\lambda)\alpha]\alpha-\lambda\alpha\beta=(1-\lambda)\alpha^2>0,
    \end{array}
\]
and the segment $(c^1(s), c^3(s))$ is stable. \\
The segment  $(c^2(s), c^3(s))$ is placed on the stability boundary since it has the root $s=0$ and two stable roots.\\
Consequently, all edges of the polytope $\mathcal{P}$ (\ref{polconv2}) are stable or placed on the stability boundary. By the Edge Theorem in interior points of this polytope are stable. 
\end{proof}  


\begin{thebibliography}{99}
 
 \bibitem[1]{Barmish1994}
    B.R.~Barmish, (1994).
    {\it New Tools for Robustness of Linear Systems}.
    New York, Macmillan Publishing Company.

 \bibitem[2]{Bhattacharyya1995}
     S.P.~Bhattacharyya, H.~Chapellat and L.H.~Keel, (1995).
     {\it Robust Control: The Parametric Approach}.
     New Jersey, Prentice-Hall.
 
 \bibitem[3]{Hollot1988} 
 A.C.~Barlett, C.V.~Hollot, L.~Huang, Root Locations of an entire polytope of polynomials: It suffices to cehck the edges,{\it Mathematics of Control, Signals and Systems,} vol. 1, pp. 61-71, 1988.

 \bibitem[4]{Nurges2005} 
 \"U.~Nurges, New stability conditions via reflection coefficients of polynomials,{\it IEEE Transactions on Automatic Control}, vol. 50, no. 9, pp.1354-1360, 2005.
 \end{thebibliography}
\end{document}